\def\?[#1]{\textbf{[#1]}\marginpar{\Large{\textbf{??}}}}
\newtheorem{thm}{Theorem}
\newtheorem{prop}{Proposition}
\newtheorem{lem}[prop]{Lemma}
\newtheorem{rem}[prop]{Remark}
\numberwithin{equation}{section}
\numberwithin{prop}{section}
\renewcommand{\Im}{\mathop{\rm Im}\nolimits}
\DeclareMathOperator{\Op}{Op}
\DeclareMathOperator{\supp}{supp}
\DeclareMathOperator{\nbd}{nbd}
\DeclareMathOperator{\el}{ell}
\DeclareMathOperator{\WF}{WF}
\DeclareMathOperator{\Res}{{\rm Res}}
\newcommand{\Fcal}{{\mathcal F}}
\newcommand{\Hcal}{{\mathcal H}}
\newcommand{\Ocal}{{\mathcal O}}
\newcommand{\Scal}{{\mathcal S}}
\newcommand{\RR}{{\mathbb R}}
\begin{document}
\title{Flat trace estimates for Anosov flows}

\author{Long Jin}
\email{jinlong@mail.tsinghua.edu.cn}
\address{Yau Mathematical Sciences Center, Tsinghua University, Beijing, China \& Beijing Institute of Mathematical Sciences and Applications, Beijing, China}

\author{Zhongkai Tao}
\email{ztao@math.berkeley.edu}
\address{Department of Mathematics, Evans Hall, University of California,
Berkeley, CA 94720, USA}

\begin{abstract}
We prove a high energy flat trace estimate for the modified resolvent of the generator of an Anosov flow. This fills a gap in the proof of the local trace formula in \cite{local} and is a by-product of the authors' ongoing project of its generalization to Axiom A flows.
\end{abstract}

\maketitle
%%%%%%%%%%%%%%%%%%%%%%%%%%%%%%%%%%%%%%%%%%%%%%%%%%%%%%%%%%%%%%%%%%%%%%%%%%%%%%%
\section{Introduction}
\label{s:intro}
%%%%%%%%%%%%%%%%%%%%%%%%%%%%%%%%%%%%%%%%%%%%%%%%%%%%%%%%%%%%%%%%%%%%%%%%%%%%%%%
This note is a by-product of the authors' ongoing project on the local trace formula for Axiom A flows, which leads to the discovery of some issues in \cite{local}. Since the situation for Anosov flows is simpler than the one for Axiom A flows, we give here a separate presentation to fix the issues in \cite{local}. 

Let $X$ be a smooth compact manifold, $\varphi_t:X\to X$ be an Anosov flow generated by a smooth vector field $V$, and $P=-iV$, Jin--Zworski \cite{local} proved the following local trace formula relating the Pollicott--Ruelle resonances $\Res(P)$ to the lengths of closed geodesics.
\begin{thm}\label{tracef}
For any $A>0$ there exists a distribution $F_A\in \Scal'(\RR)$ supported in $[0,\infty)$ such that
\begin{align*}
    \sum\limits_{\mu\in\Res(P),\Im(\mu)>-A}e^{-i\mu t}+F_A(t)=\sum\limits_{\gamma }\frac{T_\gamma^\#\delta(t-T_\gamma)}{|\det(I-\mathcal{P}_\gamma)|},\quad t>0
\end{align*}
in $\mathcal{D}'((0,\infty))$, where the sum on the right hand side is taken over all closed geodesics, $\mathcal{P}_\gamma$ is the Poincar\'e map, and
\begin{align}
\label{e:error}
    |\widehat{F}_A(\lambda)|=\Ocal_{A,\varepsilon}(\langle \lambda\rangle^{2n+1+\varepsilon}),\quad \Im \lambda<A-\varepsilon
\end{align}
for any $\varepsilon>0$.
\end{thm}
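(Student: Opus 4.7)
My plan is to follow the standard route to local trace formulas: derive the identity by contour-deforming an integral of the flat trace of the resolvent of $P$, taking Guillemin's closed-orbit trace formula as the input on a high contour. First I would invoke the meromorphic continuation of $R(\lambda):=(P-\lambda)^{-1}$, after Faure--Sj\"ostrand and Dyatlov--Zworski, from $\{\Im\lambda\gg 0\}$ to all of $\CC$ as an operator between suitable anisotropic Sobolev spaces, with poles exactly at $\Res(P)$. For $\Im\lambda\gg 0$ one has the explicit representation
\[ R(\lambda) \;=\; i\int_0^\infty e^{it(\lambda-P)}\,dt.\]
Next I would apply Guillemin's flat trace formula, which identifies $\Tr^\flat e^{-itP}$ for $t>0$ with the closed-orbit sum on the right of the theorem statement, and consequently identifies $\Tr^\flat R(\lambda)$ for $\Im\lambda\gg 0$ with a Dirichlet-type series $i\sum_\gamma T_\gamma^\# e^{i\lambda T_\gamma}/|\det(I-\mathcal{P}_\gamma)|$.

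With these ingredients, I would realize $\Tr^\flat e^{-itP}|_{t>0}$ as the inverse Fourier transform along a high contour $\Im\lambda=C$ of $\Tr^\flat R(\lambda)$, and then shift the contour downward to $\Im\lambda=-A$. Each resonance $\mu\in\Res(P)$ with $\Im\mu>-A$ crossed in this process contributes $e^{-i\mu t}$ with the appropriate multiplicity from the residue of $R(\lambda)$, while the remaining contour integral along $\Im\lambda=-A$ defines the error $F_A(t)$. Since $F_A$ is constructed as an inverse Fourier transform along a horizontal line of a quantity that is holomorphic in $\{\Im\lambda>-A\}$, it is automatically supported in $[0,\infty)$, and $\widehat{F}_A(\lambda)$ is essentially $\Tr^\flat R(\lambda)|_{\Im\lambda=-A}$.

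The main obstacle is the polynomial bound (\ref{e:error}), which, by the above identification, reduces to showing $|\Tr^\flat R(\lambda)| = \Ocal_{A,\varepsilon}(\langle\lambda\rangle^{2n+1+\varepsilon})$ uniformly for $\Im\lambda<A-\varepsilon$. For $\Im\lambda\le 0$ the flat trace of $R(\lambda)$ is not even defined without modification, since the wavefront set of its Schwartz kernel can intersect the conormal to the diagonal; so one must work with a suitable modified resolvent, obtained by subtracting explicit microlocal correction terms whose contribution on $(0,\infty)$ can be computed separately, and prove the estimate for that. I expect the estimate itself to come from writing the modified resolvent as an integral of the propagator $e^{-itP}$ over a long time window, exploiting the Anosov dynamics on the cotangent bundle to push the wavefront of $e^{-itP}$ off the diagonal away from closed orbits, and then combining stationary phase on periodic-orbit contributions with anisotropic propagation bounds to control the remainder. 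Supplying precisely this high-energy flat trace estimate is the content of the present note and closes the gap in \cite{local}.
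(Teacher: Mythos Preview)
Your contour-shifting framework (Guillemin's trace formula on a high line, meromorphic continuation, residues at resonances, error term as the low contour integral) is exactly the scheme of \cite{local}, and you correctly identify the high-energy flat trace bound as the crux. However, your description of \emph{how} that bound is obtained diverges from what the paper actually does, and your sketch would not close the gap as written.

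First, the modification of the resolvent is not ``subtracting explicit microlocal correction terms.'' The paper (following \cite{local} and \cite{zeta}) adds a complex absorbing potential: one takes $Q\in\Psi^{\comp}_h(X)$ with $\sigma_h(Q)\geq 0$, elliptic on $\{|\xi|\le 1/2\}$, and works with $\widetilde{P}_h(z)=hP-iQ-z$. The point is that $\widetilde{R}_h(z)=\widetilde{P}_h(z)^{-1}$ is then \emph{globally invertible} on the anisotropic space $\Hcal^s_h$ with $\|\widetilde{R}_h(z)\|=\Ocal(h^{-1})$ for $z$ in a strip of width $h^\varepsilon$; there are no poles, and the kernel is $h$-tempered, so its semiclassical wavefront set makes sense. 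The relation to the true resolvent is recovered afterwards via the determinant/trace identities in \cite[\S4]{local}, using that $Q$ is trace class.

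Second, the flat trace estimate is not proved by ``writing the modified resolvent as an integral of $e^{-itP}$ over a long time window'' with stationary phase on periodic orbits. That integral representation is only valid for $\Im\lambda\gg 0$ and does not survive the continuation into $\Im\lambda<0$; the whole difficulty is that in the region of interest one has only the abstract anisotropic-space inverse. The paper's route is instead: (i) establish $\WF_h'(\widetilde{R}_h(z))\cap S^*(X\times X)\subset\kappa(\Delta\cup\Omega_+\cup(E_u^*\times E_s^*))$ directly, by a two-parameter $(h,\tilde h)$ rescaling of the propagation estimates of \cite[Proposition~3.4]{zeta} together with a duality argument for the reversed flow; (ii) precompose with the short-time propagator $e^{-it_0h^{-1}\widetilde{P}_h(z)}$ to push the wavefront set off the diagonal $\Delta(S^*X)$; (iii) apply a general lemma saying that any $h$-tempered operator with $\WF_h'\cap\Delta(S^*X)=\varnothing$ and $\|AP(h)B\|_{L^2\to L^2}=\Ocal(h^{-m})$ for $A,B\in\Psi^{\comp}_h$ has $\Tr^\flat P(h)=\Ocal(h^{-2n-m})$. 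This yields $T(z)=\Ocal(h^{-2n-2})$, which after the $h^\varepsilon$-rescaling of \cite{zeta} gives \eqref{e:error}. The two-parameter wavefront argument in (i) and the general flat-trace lemma in (iii) are the genuine content your proposal is missing.
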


The last estimate \eqref{e:error} has been modified comparing to \cite[(1.5)]{local}. The additional loss of $\varepsilon$ in the exponent in \eqref{e:error} comes from the following mistake in \cite{local}: rescaling from \cite[(4.20)]{local} back to \cite[(4.1)]{local}, we should gain an additional $h$ from the derivative changing from $\frac{d}{dz}$ to $\frac{d}{d\lambda}$, but also have $|z|=h|\lambda|\sim h^{1/2}$ and thus the result should be $h^{-2n}\sim\lambda^{4n}$. However, we can go back to the setting of \cite[Proposition 3.4]{zeta} and replace $h^{1/2}$ by any $h^{\varepsilon}$ with $\varepsilon\in(0,1)$ arbitrarily small. This way we also replace the bound in \cite[(4.19)]{local} and \cite[(4.20)]{local} by $h^{-(2-\varepsilon)n-2}$ and thus we obtain the bound in \eqref{e:error}. In section \ref{s:flattrace}, we will give a simpler proof for a weaker high energy flat trace estimate, comparing to \cite[Proposition 3.1]{local}, see Theorem \ref{traceest}. From this, the bound in \cite[(4.20)]{local} becomes $h^{-2n-2}$, but still gives the same bound in \eqref{e:error}. The advantage is that we can avoid the complicated construction for complex absorbing potential $Q$ as in \cite[\S 2.5]{local}.

In \cite{local}, the proof for the high energy flat trace estimate \cite[Proposition 3.1]{local} was incomplete as it relied on the following flawed statement (\cite[(2.14)]{local}) about the semiclassical wavefront set for the resolvent $R_h(z)=(hP-z)^{-1}$:
\begin{align*}
    \WF_h'(R_h(z))\cap S^*(X\times X)\subset \kappa(\Delta(T^*X)\cup \Omega_+\cup(E_u^*\times E_s^*)\setminus\{0\}),
\end{align*}
which was used to deduce the same statement \cite[(2.19)]{local} for the modified resolvent $\widetilde{R}_h(z)=(hP-iQ-z)^{-1}$. However, $R_h(z)$ has poles which are exactly the Pollicott--Ruelle resonances. Even in the set where it is well-defined, it is not clear that the kernel is $h$-tempered uniformly in $z$, and thus $\WF_h'(R_h(z))$ may not be defined. To remedy this issue, we analyze the modified resolvent $\widetilde{R}_h(z)$ directly to give the statement \cite[(2.19)]{local}, which is the correct statement eventually used in the proof of Theorem \ref{tracef} in \cite{local}. This will be done in Proposition \ref{wf} in Section \ref{s:wavefront}.

For more details on the notations we refer to \cite{local}. For preliminaries on semiclassical analysis we refer to Zworski \cite{semibook} and Dyatlov--Zworski \cite[Appendix E]{resbook}. For other recent developments concerning trace formulas for Pollicott-Ruelle resonances, see \cite{Je1}, \cite{Je2}.

%%%%%%%%%%%%%%%%%%%%%%%%%%%%%%%%%%%%%%%%%%%%%%%%%%%%%%%%%%%%%%%%%%%%%%%%%%%%%%%
\section{Wavefront set estimates}
\label{s:wavefront}
%%%%%%%%%%%%%%%%%%%%%%%%%%%%%%%%%%%%%%%%%%%%%%%%%%%%%%%%%%%%%%%%%%%%%%%%%%%%%%%

In this section, we fix the issue in \cite{local} by proving the following semiclassical wavefront set estimate for the modified resolvent $\widetilde{R}_h(z)$. We briefly recall the notations from \cite{local}: Let $Q$ be the absorbing potential as in \cite{local}, to be more precise, we require
\begin{itemize}
\item $\WF_h(Q)\subset\{|\xi|<1\}$;
\item $\sigma_h(Q)>0$ on $\{|\xi|\leq 1/2\}$;
\item and $\sigma_h(Q)\geq0$ everywhere.
\end{itemize}
The additional requirement in \cite[\S 2.5]{local} is used to improve the power in the flat trace estimate \eqref{crucial} and we will give a simpler argument in Secion \ref{s:flattrace} to avoid the complications. In \cite[Proposition 3.4]{zeta}, it is shown that for fixed $C_1,C_2,\varepsilon>0$, $\widetilde{P}_h(z)=hP-iQ-z$ is invertible for $z\in [-C_1h^\varepsilon,C_1h^\varepsilon]+i[-C_2h,1]$ and its inverse satisfied the following estimate
\begin{align*}
    \|\widetilde{R}_h(z)\|_{\Hcal^s_h\to \Hcal^s_h}\leq Ch^{-1}.
\end{align*}
Here $\Hcal^s_h=H_{sG(h)}$ is the semiclassical anisotropic Sobolev space defined in \cite[\S 3.3]{zeta} and $s>0$ is a parameter chosen large enough depending on $C_1$ and $C_2$. The weight function $G(h)$ is constructed in a way that $\widetilde{P}_h(z):D^s_h:=\{u\in \Hcal^s_h: \widetilde{P}_h(z)u\in\Hcal^s_h\}\to \Hcal^s_h$ is invertible. In the following we will only use the fact that 
\begin{align*}
    H^s_h\subset \Hcal^s_h\subset H^{-s}_h,
\end{align*}
where $H^s_h$ is the usual semiclassical Sobolev spaces on $X$.

%%%%%%%%%%%%%%%%%%%%%%%%%%%%%%%%%%%%%%%%%%%%%%%%%%%%%%%%%%%%%%%%%%%%%%%%%%%%%%%
\begin{prop}
\label{wf}
We have 
\begin{align}
\label{e:semiwf}
    \WF_h'(\widetilde{R}_h(z))\cap S^*(X\times X)\subset \kappa(\Delta(T^*X)\cup \Omega_+\cup(E_u^*\times E_s^*)\setminus\{0\})
\end{align}
where $\Omega_+$ is the flowout
\begin{align*}
    \Omega_+=\{(e^{tH_p}(y,\eta),y,\eta)\,:\, p(y,\eta)=0\}\subset T^\ast(X\times X)\simeq T^\ast X\times T^\ast X,
\end{align*}
and $\kappa:T^\ast(X\times X)\setminus\{0\}\to S^\ast(X\times X)$ is the natural projection map.
\end{prop}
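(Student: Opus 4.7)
The strategy is to probe $\widetilde{R}_h(z)$ microlocally and combine three standard tools: microlocal elliptic regularity, real principal type propagation of singularities along $H_p$, and the radial source/sink estimates of Faure--Sj\"ostrand type at $E_s^*$ and $E_u^*$ (cf.\ \cite[Appendix E]{resbook}). Given $(\rho',\rho)\in S^*(X\times X)$ not in $\kappa(\Delta(T^*X)\cup\Omega_+\cup(E_u^*\times E_s^*))$, I would choose $A,B\in\Psi^{\comp}_h(X)$ elliptic at $\rho$ and $\rho'$ respectively, with $\WF_h$-supports in sufficiently small conic neighborhoods, and aim to prove $B\widetilde{R}_h(z)A=O(h^\infty)_{L^2\to L^2}$. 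Writing $u=\widetilde{R}_h(z)Af$ for $f\in L^2$, the a priori bound $\|u\|_{\Hcal^s_h}\leq Ch^{-1}\|f\|_{L^2}$ together with $\Hcal^s_h\subset H^{-s}_h$ shows that $\widetilde{R}_h(z)$ is $h$-tempered, so $\WF_h'(\widetilde{R}_h(z))$ is defined and the proposition reduces to proving $\|Bu\|_{L^2}=O(h^\infty)\|f\|_{L^2}$ from the equation $\widetilde{P}_h(z)u=Af$.

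Because $\WF_h(Q)\subset\{|\xi|<1\}$ while $S^*(X\times X)$ captures fiber-infinity behavior, at the frequencies that matter $\widetilde{P}_h(z)$ acts microlocally as $hP-z$ with principal symbol $p(y,\eta)=\langle\eta,V(y)\rangle$. I would then iteratively enlarge the set $\Gamma\subset T^*X$ on which $u$ is microlocally $O(h^\infty)\|f\|_{L^2}$. First, microlocal elliptic regularity adds $T^*X\setminus(\Sigma\cup\WF_h(A))$, where $\Sigma=\{p=0\}$. Next, the radial source estimate at $E_s^*$ adds a punctured conic neighborhood of $E_s^*\setminus\WF_h(A)$; its threshold hypothesis on the a priori regularity of $u$ is precisely what the anisotropic space $\Hcal^s_h$ supplies once $s$ is taken large enough in the construction of $G(h)$ in \cite[\S3.3]{zeta}. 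Propagation of singularities along $H_p$ then adds the forward $H_p$-flowout of $\Gamma\cap(\Sigma\setminus E_u^*)$, and finally the radial sink estimate at $E_u^*$ extends $\Gamma$ into $E_u^*$ provided the approaching bicharacteristics already lie in $\Gamma$.

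Tracking the obstructions in this chain produces exactly three sources of wavefront in the kernel: $\WF_h(A)$ itself, contributing the diagonal $\Delta(T^*X)$; its forward $H_p$-flowout on $\Sigma$, contributing $\Omega_+$; and a possible cascade initiated by $\WF_h(A)\cap E_s^*$ that propagates through $\Sigma$ into the sink, contributing $E_u^*\times E_s^*$. When $(\rho',\rho)$ lies outside $\kappa(\Delta\cup\Omega_+\cup(E_u^*\times E_s^*))$, shrinking the supports of $A$ and $B$ around $\rho$ and $\rho'$ forces $\rho'\in\Gamma$, yielding $\|Bu\|_{L^2}=O(h^\infty)\|f\|_{L^2}$.

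The main obstacle is the radial source estimate at $E_s^*$: it is a threshold estimate, requiring regularity of $u$ above a level dictated by the Anosov expansion rates on the stable and unstable bundles. This is precisely why the weight $G(h)$ defining $\Hcal^s_h$ in \cite{zeta} is chosen with opposite signs on the stable and unstable sides and with $s$ sufficiently large. Uniformity of all three microlocal estimates for $z\in[-C_1h^\varepsilon,C_1h^\varepsilon]+i[-C_2h,1]$ is automatic since $z$ enters only through lower-order terms and leaves the principal symbol and the source/sink geometry of $p$ unchanged.
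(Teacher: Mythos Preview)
Your outline essentially reproduces the argument of \cite[Proposition~3.4]{zeta} for the \emph{finite} part of $\WF_h'(\widetilde R_h(z))$, but the proposition is specifically about the contribution at fiber infinity $S^*(X\times X)$, and that is where your proposal has a gap. Two concrete problems:

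\emph{(i) Testing fiber infinity.} You take $A,B\in\Psi^{\comp}_h(X)$ elliptic at $\rho,\rho'$. An operator with compact semiclassical wavefront set cannot be elliptic at a point of $S^*X$; the condition $B\widetilde R_h(z)A=\Ocal(h^\infty)_{L^2\to L^2}$ for such $A,B$ only controls the finite wavefront set. To probe $S^*(X\times X)$ one needs either operators in $\Psi^0_h$ elliptic in conic neighborhoods all the way to infinity, or an equivalent device. The paper's device is a second small parameter $\tilde h$: one rescales $h\mapsto h\tilde h$ so that a direction at $h$-fiber-infinity becomes a \emph{finite} point in the $h\tilde h$-calculus, and then applies the finite estimate \eqref{pos} from \cite{zeta} there. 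The price is that $Q$ (quantized at scale $h$) and $B_{\tilde h}$ (quantized at scale $h\tilde h$) live in different calculi, and one needs the mixed-scale lemma $\|B_{\tilde h}Q\|_{H^{-N}_{h\tilde h}\to H^N_{h\tilde h}}=\Ocal((h\tilde h)^\infty)$ to replace $Q_{\tilde h}$ by $\tilde hQ$ and recover $\widetilde P_h$ from the rescaled operator.

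\emph{(ii) The sink does not give the sharp set.} Even if you work genuinely at fiber infinity, the radial sink estimate at $E_u^*$ is a \emph{below-threshold} estimate: from control in a punctured neighborhood it returns $\|Au\|_{H^s_h}=\Ocal(h^\infty)$ only for $s$ below the dynamical threshold, not for all $s$, so it does not by itself remove $\kappa(E_u^*)$ from $\WF_h(u)$. In the paper's rescaled picture the same phenomenon appears as the constraint that $B$ must avoid $\{|\xi|\le 1\}$, which forces $(x_0,\xi_0)\notin E_u^*$ when $(y_0,\eta_0)\in p^{-1}(0)$. Either way the forward argument yields only the weaker inclusion with $E_u^*\times p^{-1}(0)$ in place of $E_u^*\times E_s^*$; your assertion that the cascade contributes exactly $E_u^*\times E_s^*$ is not justified. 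The paper closes this gap by duality: running the same argument for $-\widetilde P_h(z)^*=-hP-iQ+\bar z$ on $\Hcal^{-s}_h$ (the reversed flow swaps $E_s^*$ and $E_u^*$) gives the inclusion with $p^{-1}(0)\times E_s^*$, and intersecting the two yields $E_u^*\times E_s^*$.
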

%%%%%%%%%%%%%%%%%%%%%%%%%%%%%%%%%%%%%%%%%%%%%%%%%%%%%%%%%%%%%%%%%%%%%%%%%%%%%%%

%%%%%%%%%%%%%%%%%%%%%%%%%%%%%%%%%%%%%%%%%%%%%%%%%%%%%%%%%%%%%%%%%%%%%%%%%%%%%%%
\begin{rem}
Note that $S^*(X\times X)\neq S^*X\times S^*X$, hence there are difficulties to deal with the fiber infinity directly. In fact, unlike the finite part of the wavefront set $T^\ast(X\times X)\simeq T^\ast X\times T^\ast X$, there is no natural way to identify the element in $S^* X\times S^*X$ where $S^\ast X=\kappa(T^\ast X\setminus\{0\})$ with the element in $S^*(X\times X)=\kappa(T^\ast(X\times X)\setminus\{0\})$. However, we do have the natural identification of the diagonal elements $\Delta(S^\ast X)=\kappa(\Delta(T^\ast X)\setminus\{0\})$.
\end{rem}
%%%%%%%%%%%%%%%%%%%%%%%%%%%%%%%%%%%%%%%%%%%%%%%%%%%%%%%%%%%%%%%%%%%%%%%%%%%%%%%

%%%%%%%%%%%%%%%%%%%%%%%%%%%%%%%%%%%%%%%%%%%%%%%%%%%%%%%%%%%%%%%%%%%%%%%%%%%%%%%
The rest of this section will be devoted to the proof of Proposition \ref{wf}.
We will follow the strategy of \cite[Proposition 3.4]{zeta}, where the authors prove the estimate for the finite part of $\WF_h'(\widetilde{R}_h(z))$. To deal with the wavefront set at fiber infinity we introduce another small parameter $\tilde{h}>0$ (which will play the role of $|(\xi,\eta)|^{-1}$).

\noindent\textbf{Step 1:} Let $p^{-1}(0)=\{(x,\xi)\in T^*X: p(x,\xi)=0\}\supset E_u^\ast\cup E_s^\ast$, we first show a weaker statement:
\begin{align}
\label{e:semiwfweak}
    \WF_h'(\widetilde{R}_h(z))\cap S^*(X\times X)\subset \kappa(\Delta(T^*X)\cup \Omega_+\cup(E_u^*\times p^{-1}(0))\setminus\{0\}).
\end{align}
Suppose $(x_0,\xi_0,y_0,\eta_0)\in\{|(\xi,\eta)|= 1\}\setminus( \Delta(T^*X)\cup \Omega_+\cup (E_u^*\times p^{-1}(0)))$, then as in \cite[Proposition 3.4]{zeta} there exist $\rho>0$ and neighbourhoods $U$ of $(x_0,\rho\xi_0)$ and $W$ of $(y_0,\rho\eta_0)$, and $A,B\in\Psi^0_{h}(X)$ such that
\begin{align}\label{pos}
    \begin{split}
    \|Au\|_{\mathcal{H}^s_{h}}\leq Ch^{-1}\|B\widetilde{P}_h(z)u\|_{\mathcal{H}^s_{h}}+\mathcal{O}(h^\infty)\|u\|_{H_{h}^{-N}},\\
    U\subset\el_{h}(A),\quad (\{|\xi|\leq 1\}\cup W)\cap \WF_{h}(B)=\varnothing.
    \end{split}
\end{align}
Moreover, $A$ is microlocally supported near $(x_0,\rho\xi_0)$ and $B$ microlocally supported in a neighbourhood of $\{e^{-tH_p}(x_0,\rho\xi_0): t\geq 0\}$. The condition that $(x_0,\xi_0,y_0,\eta_0)\notin E_u^*\times p^{-1}(0)$ guarantees that $\WF_h(B)\cap \{|\xi|\leq 1\}=\varnothing$ for some large number $\rho>0$. We can also assume that 
\begin{align*}
   A=\Op_h(a),\quad B=\Op_h(b),\quad Q=\Op_h(q)
\end{align*}
with symbols $b\in S^0$ and $a,q\in C_0^\infty$ independent of $h$, and $\supp q\subset \{|\xi|\leq 1\}$ so that $\supp q\cap \supp b=\varnothing$.
Here $\Op_h$ denotes a semiclassical quantization on a compact manifold, see \cite[Appendix E]{resbook}.

Replacing $h$ by $h\tilde{h}$ in the estimate \eqref{pos}, we get 
$$A_{\tilde{h}}=\Op_{h\tilde{h}}(a),\quad B_{\tilde{h}}=\Op_{h\tilde{h}}(b),\quad Q_{\tilde{h}}=\Op_{h\tilde{h}}(q)\in\Psi^0_{h\tilde{h}}(X)$$
such that
\begin{align}
\label{postilde}
\begin{split}
        \|A_{\tilde{h}}u\|_{\mathcal{H}^s_{h\tilde{h}}}\leq& C(h\tilde{h})^{-1}\|B_{\tilde{h}}(h\tilde{h}P-\tilde{h}z-iQ_{\tilde{h}} )u\|_{\mathcal{H}^s_{h\tilde{h}}}+\mathcal{O}((h\tilde{h})^\infty)\|u\|_{H_{h\tilde{h}}^{-N}},\\
    &\begin{gathered}U\subset\el_{h\Tilde{h}}(A_{\tilde{h}}),\quad (\{|\xi|\leq 1\}\cup W)\cap \WF_{h\Tilde{h}}(B_{\tilde{h}})=\varnothing.\end{gathered}
\end{split}
\end{align}
Note $z\in [-C_1h^\varepsilon,C_1h^\varepsilon]+i[-C_2h,1]$ implies $\tilde{h}z\in [-C_1 (\tilde{h}h)^\varepsilon,C_1(\tilde{h}h)^\varepsilon]+i[-C_2\tilde{h}h,1]$. However we wish to recover $\widetilde{P}_h$ in estimate \eqref{postilde}, and this require us to replace $Q_{\tilde{h}}$ by $\tilde{h}Q$ and to deal with the $Q$ term. We need the following lemma:
\begin{lem}
For every $N\in\mathbb{N}$,
\begin{align*}
    \|B_{\tilde{h}}Qu\|_{H^N_{h\tilde{h}}}=\mathcal{O}(h^\infty \tilde{h}^\infty)\|u\|_{H^{-N}_{h\tilde{h}}}.
\end{align*}
\end{lem}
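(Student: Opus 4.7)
The key observation is that although $B_{\tilde h}$ and $Q$ are quantized at different semiclassical scales, their symbols have disjoint $\xi$-supports. Passing everything into the $h\tilde h$-calculus should reduce the lemma to a standard non-stationary phase estimate.

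The first step is to rewrite $Q$ in the $h\tilde h$-calculus. A change of variable $\xi \mapsto \eta/\tilde h$ in the defining oscillatory integral of $\Op_h(q)$ yields the identity
\[
Q = \Op_h(q) = \Op_{h\tilde h}(q_{\tilde h}), \qquad q_{\tilde h}(x,\eta) := q(x,\eta/\tilde h).
\]
Since $\supp q \subset \{|\xi|\leq 1\}$, we have $\supp q_{\tilde h} \subset \{|\eta|\leq \tilde h\}$, while by \eqref{postilde} the symbol $b$ of $B_{\tilde h}$ vanishes on $\{|\xi|\leq 1\}$. In the $h\tilde h$-calculus the symbols $b$ and $q_{\tilde h}$ therefore have disjoint supports with gap at least $1-\tilde h \geq 1/2$ for $\tilde h$ small.

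The second step is to estimate the composition symbol using the standard oscillatory-integral formula
\[
(b \# q_{\tilde h})(x,\xi) = \frac{1}{(2\pi h\tilde h)^n}\iint e^{-iy\cdot\eta/(h\tilde h)}\, b(x, \xi + \eta)\, q(x+y, \xi/\tilde h)\, dy\, d\eta.
\]
The integrand vanishes unless $|\xi|\leq \tilde h$ and $|\xi+\eta|\geq 1$, which forces $|\eta|\geq 1-\tilde h$, so the phase is non-stationary in $y$ throughout the support. Repeated integration by parts in $y$ using $\Delta_y e^{-iy\cdot\eta/(h\tilde h)} = -\frac{|\eta|^2}{(h\tilde h)^2}\, e^{-iy\cdot\eta/(h\tilde h)}$ then yields
\[
|\partial_x^\alpha \partial_\xi^\beta(b \# q_{\tilde h})(x,\xi)| \leq C_{N,\alpha,\beta}(h\tilde h)^N \tilde h^{-|\beta|}
\]
for every $N$, $\alpha$, $\beta$, with the $\tilde h^{-|\beta|}$ factor coming from $\partial_\xi$ hitting $q(x+y, \xi/\tilde h)$.

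The third step is to convert these symbol bounds into operator bounds via a semiclassical Calder\'on--Vaillancourt estimate applied in the $h\tilde h$-calculus: for each fixed $N$, the mapping norm $H^{-N}_{h\tilde h} \to H^N_{h\tilde h}$ depends on only finitely many seminorms of the symbol, so choosing $N$ in Step 2 large enough to absorb the finite $\tilde h^{-|\beta|}$ losses yields $\|B_{\tilde h}Q\|_{H^{-N}_{h\tilde h} \to H^N_{h\tilde h}} = \mathcal{O}(h^\infty \tilde h^\infty)$. The main subtlety I anticipate is precisely this $\tilde h$-bookkeeping: the rescaled symbol $q_{\tilde h}$ lies in no fixed symbol class as $\tilde h \to 0$, but the arbitrarily many powers of $h\tilde h$ produced by non-stationary phase dominate any fixed polynomial loss in $\tilde h$ and simultaneously deliver the $h^\infty \tilde h^\infty$ smallness.
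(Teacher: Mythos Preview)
Your argument is correct and rests on the same underlying mechanism as the paper's proof---the $\xi$-support separation between $b$ and $q$---but the execution is a mirror image of what the paper does. The paper stays in the $h$-calculus and rescales $B_{\tilde h}$, writing $B_{\tilde h}=\Op_h(b(x,\tilde h\xi))$; it then exploits that $b(x,\tilde h\xi)$ is supported where $|\xi|\gtrsim \tilde h^{-1}$ while $q\in C_0^\infty$ is supported where $|\xi|\leq 1$, and bounds the $S^{0,M}$ seminorm of the four-fold $\#$-product by placing $b(x,\tilde h\xi)$ in $S^{m}$ and $q$ in $S^{-m-2N}$ for arbitrary $m$, which yields the $\mathcal O(h^m\tilde h^m)$ decay through symbol-class bookkeeping. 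You instead pass $Q$ into the $h\tilde h$-calculus via $q_{\tilde h}(x,\eta)=q(x,\eta/\tilde h)$ and run a direct non-stationary phase on the composition integral; the gap $|\eta|\geq c>0$ on the support produces arbitrarily many powers of $h\tilde h$ from integration by parts in $y$, and the finite $\tilde h^{-|\beta|}$ losses from $\partial_\xi$ hitting $q_{\tilde h}$ are absorbed. Your route is more elementary and makes the source of both the $h^\infty$ and $\tilde h^\infty$ gains completely explicit, at the cost of writing out the oscillatory integral; the paper's route is terser but leans more heavily on composition estimates in mismatched symbol orders. One small point: you assert that $b$ vanishes on $\{|\xi|\leq 1\}$, whereas the paper only records $\supp b\cap\{\xi=0\}=\varnothing$ and $\supp b\cap\supp q=\varnothing$; either suffices, since all you need is a fixed positive $\xi$-gap between the supports.
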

\begin{proof}
Using a partition of unity argument we may assume that we are on $\mathbb{R}^n$ and all the symbols are compactly supported in $\mathbb{R}^n$.
Recall (e.g. \cite[Theorem 4.23]{semibook}) for a sufficiently large constant $M>0$,
\begin{align*}
    \|\Op_h(a)\|_{L^2\to L^2}\lesssim \|a\|_{S^{0,M}},\quad \|a\|_{S^{k,M}}:=\sum\limits_{|\alpha|+|\beta|\leq M}\left\|\langle \xi\rangle^{|\alpha|-k}\partial^\beta_x\partial^\alpha_{\xi}a(x,\xi)\right\|_{L^\infty}.
\end{align*}
Since $\{\xi=0\}\cap\supp b=\varnothing$, for $m\gg 1$,
\begin{align*}
    \|B_{\tilde{h}}Q\|_{H^{-N}_{h\tilde{h}}\to H^N_{h\tilde{h}}}&=\|\langle h\tilde{h}D\rangle^N B_{\tilde{h}}Q\langle h\tilde{h}D\rangle^N\|_{L^2\to L^2}\\
    &\lesssim \|\langle \tilde{h}\xi\rangle^N\# b(x,\tilde{h}\xi)\# q(x,\xi)\# \langle \tilde{h}\xi\rangle^N\|_{S^{0,M}}\\
    &\lesssim h^m\|\langle \tilde{h}\xi\rangle^N\|^2_{S^{N,M'}} \|b(x,\tilde{h}\xi)\|_{S^{m,M'}} \|q(x,\xi)\|_{S^{-m-2N,M'}}\\
    &\lesssim \Ocal(h^m \tilde{h}^m).
\end{align*}
Since $m$ can be chosen arbitrarily large, this concludes the proof.
\end{proof}

Now we go back to \eqref{postilde} and taking $u(x)=\widetilde{R}_h(z)(\psi(x)e^{ix\cdot \rho\eta_0/h\tilde{h}})$ (here we choose a local coordinates and identify a neighborhood of $x_0$ to subset of $\mathbb{R}^n$) where $\supp\psi\times \{\rho\eta_0\}\subset W$, the wavefront set condition \eqref{pos} for $B$ gives
\begin{align*}
    \|B_{\tilde{h}}Q_{\tilde{h}}\|_{H^{-N}_{h\tilde{h}}\to H^N_{h\tilde{h}}} =\Ocal(h^\infty\tilde{h}^\infty),\quad \|B_{\tilde{h}}(\psi(x)e^{ix\cdot \rho\eta_0/h\Tilde{h}})\|_{H^N_{h\tilde{h}}}=\Ocal(h^\infty\tilde{h}^\infty).
\end{align*}
Therefore we have
\begin{align*}
    \|A_{\tilde{h}}u\|_{\mathcal{H}^r_{h\tilde{h}}}&\leq Ch^{-1}\|B_{\tilde{h}}\widetilde{P}_hu\|_{\mathcal{H}^r_{h\tilde{h}}}+C(h\tilde{h})^{-1}\|B_{\tilde{h}}Q_{\tilde{h}} u\|_{\mathcal{H}^r_{h\tilde{h}}}\\
    &\quad\quad +Ch^{-1}\|B_{\tilde{h}}Q u\|_{\mathcal{H}^r_{h\tilde{h}}}+\Ocal((h\tilde{h})^\infty)\|u\|_{H_{h\tilde{h}}^{-N}}
    \\
    &=\mathcal{O}(h^{-1})\|B_{\tilde{h}}(\psi(x)e^{ix\cdot r\eta_0/h\Tilde{h}})\|_{\mathcal{H}^r_{h\tilde{h}}}+\mathcal{O}(h^\infty\tilde{h}^\infty)\|u\|_{H_{h\tilde{h}}^{-N}}\\
    &=\mathcal{O}(h^\infty\Tilde{h}^\infty).
 \end{align*}
This means $\WF_{h\Tilde{h}}(u)\cap U=\varnothing$, and thus if $\chi\in C^\infty(X)$ and $\supp \chi \times \{\rho\xi_0\}\subset U$, then
\begin{align*}
        \int\chi(x)e^{-ix\cdot \rho\xi_0/h\Tilde{h}}\widetilde{R}_h(z)(\psi(x)e^{ix\cdot \rho\eta_0/h\Tilde{h}})dx=\mathcal{O}(h^\infty\Tilde{h}^\infty)
\end{align*}
Moreover, by construction it is easy to see the estimate is locally uniform in $(x_0,\xi_0,y_0,\eta_0)$. Therefore by the equivalent definition of semiclassical wavefront sets using the semiclassical Fourier transform (see \cite[Definition 3.2]{wavefront}), $\kappa(x_0,\xi_0,y_0,\eta_0)=\kappa(x_0,\rho\xi_0,y_0,\rho\eta_0)\not\in\WF_h'(\widetilde{R}_h(z))\cap S^\ast(X\times X)$ and we have \eqref{e:semiwfweak}.

\noindent\textbf{Step 2:} The previous method does not work for $(x_0,\xi_0,y_0,\eta_0)\in E_u^*\times p^{-1}(0)$ since $\WF_h(B)$ has to intersect the zero section $\{\xi=0\}$. Here we argue by duality.  Suppose $(x_0,\xi_0,y_0,\eta_0)\in  \{|(\xi,\eta)|= 1\}\setminus(\Delta\cup \Omega_+\cup(p^{-1}(0)\times E_s^*))$, we consider the following operator
\begin{align*}
    -\widetilde{P}_h(z)^*:=-hP-iQ-(-\bar{z}),
\end{align*}
acting on $\Hcal^{-s}_h$. We see that this corresponds to the reversed Anosov flow $\varphi_{-t}$ generated by $-V$ and $z\in[-C_hh^\varepsilon,C_1h^\varepsilon]+i[-C_2h,1]$ also gives $-\bar{z}$ in the same region. We can repeat the same argument with the opposite propagation direction we get $\widetilde{P}_h(z)^*$ is invertible, with inverse $\widetilde{R}_h(z)^*:\Hcal^{-s}_h\to\Hcal^{-s}_h$ satisfying
    \begin{align*}
        \|\widetilde{R}_h(z)^*\|_{\Hcal^{-s}_h\to\Hcal^{-s}_h}\leq Ch^{-1}.
    \end{align*}
Moreover, there exist $\rho>0$, $U=\nbd(x_0,\rho\xi_0)$ and $W=\nbd(y_0,\rho\eta_0)$ such that for $\supp\psi\times\{\rho\eta_0\}\subset W$ and $\supp\chi\times\{\rho\xi_0\}\subset U$ we have
    \begin{align*}
        \int\psi(x)e^{-ix\cdot\rho\eta_0/h\Tilde{h}}\widetilde{R}_h(z)^*(\chi(x)e^{ix\cdot\rho\xi_0/h\Tilde{h}})dx=\mathcal{O}(h^\infty\Tilde{h}^\infty),
    \end{align*}
and the estimate is locally uniform in $(x_0,\xi_0,y_0,\eta_0)$. Therefore $\kappa(y_0,\eta_0,x_0,\xi_0)\not\in \WF_h'(\widetilde{R}_h(z)^\ast)\cap S^\ast(X\times X)$. Since the Schwartz kernel of $\widetilde{R}_h(z)^\ast$ is $\overline{K(y,x)}$ if the $K(x,y)$ is the Schwartz kernel of $\widetilde{R}_h(z)$, we have $\kappa(x_0,\xi_0, y_0,\eta_0)\not\in \WF_h'(\widetilde{R}_h(z))\cap S^\ast(X\times X)$ and thus
$$\WF_h'(\widetilde{R}_h(z))\cap S^\ast(X\times X)\subset\kappa(\Delta(T^\ast X)\cup\Omega_+\cup(p^{-1}(0)\times E_s^\ast))\setminus\{0\}).$$
Combining this with \eqref{e:semiwfweak} we get the desired estimate \eqref{e:semiwf} and finish the proof of Proposition \ref{wf}.

%%%%%%%%%%%%%%%%%%%%%%%%%%%%%%%%%%%%%%%%%%%%%%%%%%%%%%%%%%%%%%%%%%%%%%%%%%%%%%%
\section{Flat trace estimates}
\label{s:flattrace}
%%%%%%%%%%%%%%%%%%%%%%%%%%%%%%%%%%%%%%%%%%%%%%%%%%%%%%%%%%%%%%%%%%%%%%%%%%%%%%%
In this section, we present a simpler argument than the one in \cite{local} to give the following flat trace estimate (see \cite[Proposition 3.1]{local}). The result is slightly weaker than the original one in \cite{local}, but avoid using \cite[Proposition 10.3]{nozw} and thus the assumption \cite[(2.7)]{local} for the complex absorbing potential $Q$.

%%%%%%%%%%%%%%%%%%%%%%%%%%%%%%%%%%%%%%%%%%%%%%%%%%%%%%%%%%%%%%%%%%%%%%%%%%%%%%%

\begin{thm}
\label{traceest}
The flat trace
\begin{align*}
    T(z)={\rm tr}^\flat ( e^{-it_0h^{-1}\widetilde{P}_h(z)} \widetilde{R}_h(z))
\end{align*}
is well-defined and holomorphic for $z$ in $[-C_1h^\varepsilon,C_1h^\varepsilon]+i[-C_2h,1]$. Moreover, we have 
\begin{align}\label{crucial}
    T(z)=\mathcal{O}(h^{-2n-2}).
\end{align}
\end{thm}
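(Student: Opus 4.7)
The plan is to prove Theorem~\ref{traceest} in two steps: first verify that $T(z)$ is well-defined and holomorphic by a microlocal analysis of the Schwartz kernel of $K(z):=e^{-it_0 h^{-1}\widetilde{P}_h(z)}\widetilde{R}_h(z)$, and then bound $|T(z)|$ by combining the resolvent estimate $\widetilde{R}_h(z)=\mathcal{O}(h^{-1})$ with a frequency-cutoff trace-class argument.

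For well-definedness I would first observe that $e^{-it_0 h^{-1}\widetilde{P}_h(z)}$ is a semiclassical Fourier integral operator whose primed wavefront relation at fiber infinity equals the graph of the Hamilton flow $\varphi_{t_0}$, since $\WF_h(Q)\subset\{|\xi|\leq 1\}$ ensures that $Q$ contributes nothing at fiber infinity. Composing this with Proposition~\ref{wf} and using the invariance $\varphi_t(E_u^*)=E_u^*$ yields
\begin{align*}
    \WF_h'(K(z))\cap S^*(X\times X)\subset\kappa\bigl(\{(\varphi_{t_0}\rho,\rho):\rho\in T^*X\}\cup\widetilde{\Omega}_+\cup(E_u^*\times E_s^*)\bigr),
\end{align*}
where $\widetilde{\Omega}_+:=\{(\varphi_s\rho,\rho):s\geq t_0,\,p(\rho)=0\}$. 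The last term meets $\Delta(S^*X)$ only at the zero section because $E_u^*\cap E_s^*=\{0\}$, while the first two pieces meet $\Delta$ cleanly along closed orbits (the transversality coming from hyperbolicity of the Poincar\'e maps of the Anosov flow), so $K(z)$ is a Lagrangian distribution with a well-defined restriction to the diagonal. Holomorphicity in $z$ is then immediate since $\widetilde{P}_h(z)=hP-iQ-z$ depends holomorphically on $z$, and hence so do its propagator and resolvent as operators on $\mathcal{H}^s_h$.

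For the bound I would use the elementary estimate
\begin{align*}
    |{\rm tr}^\flat T|\leq Ch^{-\dim X}\|T\|_{\mathcal{H}^s_h\to\mathcal{H}^s_h},
\end{align*}
valid for operators $T$ with the above wavefront structure. One proves it by inserting a semiclassical frequency cutoff $\chi_h\in\Psi^0_h$ whose compact microlocal support contains $\WF_h'(T)$ at fiber infinity, using cyclicity to write ${\rm tr}^\flat T={\rm tr}(\chi_h T\chi_h)+\mathcal{O}(h^\infty)$, and then applying $\|\chi_h T\chi_h\|_{\mathrm{tr}}\leq\|\chi_h\|_{\mathrm{HS}}^2\|T\|_{\mathrm{op}}=\mathcal{O}(h^{-\dim X})\|T\|_{\mathrm{op}}$. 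Applied to $T=K(z)$, the propagator is uniformly bounded on $\mathcal{H}^s_h$, so $\|K(z)\|_{\mathcal{H}^s_h\to\mathcal{H}^s_h}\leq Ch^{-1}$; combined with $\dim X=2n+1$ this gives \eqref{crucial}. The main technical subtlety I anticipate is implementing the Hilbert--Schmidt bound in the anisotropic space $\mathcal{H}^s_h$ rather than $L^2$; but since the weight $e^{sG/h}$ defining $\mathcal{H}^s_h$ is itself a semiclassical pseudodifferential operator that commutes with $\chi_h$ to leading order, this is routine, and it is precisely what allows our approach to simplify \cite[Prop.~3.1]{local}, avoiding both \cite[Prop.~10.3]{nozw} and the special construction of $Q$ in \cite[\S2.5]{local}.
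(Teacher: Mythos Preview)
There are two genuine gaps.

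For well-definedness, you claim the wavefront pieces ``meet $\Delta$ cleanly along closed orbits'' and that $K(z)$ is ``a Lagrangian distribution with a well-defined restriction to the diagonal.'' Neither is correct: $K(z)$ is not a Lagrangian distribution (the piece $E_u^*\times E_s^*$ already prevents this), and the flat trace in the sense used here is defined only when $\WF'(K)$ is disjoint from the conormal of the diagonal, not under a clean-intersection hypothesis. The paper's mechanism is that $\WF_h'(K(z))\cap\Delta(S^*X)$ is in fact \emph{empty} once $t_0$ is chosen smaller than the shortest closed orbit of $\varphi_t$ (a hypothesis you omit): the graph of $e^{t_0H_p}$ then has no fixed point in $S^*X$, and your $\widetilde\Omega_+$ \emph{never} meets $\Delta(S^*X)$, since a periodic orbit of $e^{tH_p}$ on $p^{-1}(0)\cap S^*X$ would force a nonzero covector in $E_u^*\oplus E_s^*$ to be fixed by the hyperbolic linearized Poincar\'e map. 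This disjointness, not clean intersection, is what the paper uses (via Lemma~\ref{wf2} and Lemma~\ref{traceest_lem}).

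For the bound, your route also diverges from the paper and is incomplete. The paper does not assert that $e^{-it_0h^{-1}\widetilde P_h(z)}$ is uniformly bounded on $\mathcal{H}^s_h$; it instead uses a Duhamel comparison with $e^{-it_0P}$ to obtain only $\|AK(z)B\|_{L^2\to L^2}=\mathcal{O}(h^{-2})$ for $A,B\in\Psi^{\rm comp}_h$, and then applies the general Lemma~\ref{traceest_lem}: writing ${\rm tr}^\flat$ as $(2\pi h)^{-2n}$ times a frequency integral, the bounded-frequency region is controlled by the operator bound, a conic neighborhood of $\{\xi+\eta=0\}$ at infinity is $\mathcal{O}(h^\infty)$ by the wavefront condition, and the remaining region is killed by rapid decay of the diagonal test function; the prefactor gives $h^{-2n-m}$ with $m=2$. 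Your Hilbert--Schmidt cutoff, if justified, would actually yield the \emph{stronger} exponent $h^{-n-m}$; your matching it to $h^{-2n-2}$ by declaring $\dim X=2n+1$ is a misreading of the paper's convention, where $n=\dim X$. The steps you call routine --- the uniform propagator bound on $\mathcal{H}^s_h$, and the identity ${\rm tr}^\flat T={\rm tr}(\chi_h T\chi_h)+\mathcal{O}(h^\infty)$ together with the passage between $L^2$ and $\mathcal{H}^s_h$ Hilbert--Schmidt norms --- are precisely the points that require work and that the paper's Fourier-side argument is designed to avoid.
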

%%%%%%%%%%%%%%%%%%%%%%%%%%%%%%%%%%%%%%%%%%%%%%%%%%%%%%%%%%%%%%%%%%%%%%%%%%%%%%%

To prove it we need a wavefront set estimate for the Schwartz kernel of $e^{-it_0h^{-1}\widetilde{P}_h(z)} \widetilde{R}_h(z)$:
%%%%%%%%%%%%%%%%%%%%%%%%%%%%%%%%%%%%%%%%%%%%%%%%%%%%%%%%%%%%%%%%%%%%%%%%%%%%%%%
\begin{lem}
\label{wf2}
\begin{align*}
{\rm WF}_h'(e^{-it_0h^{-1}\widetilde{P}_h(z)} \widetilde{R}_h(z))\cap S^*(X\times X)&\subset  \\
\kappa(\{(x,\xi,y,\eta)\,:\,(e^{-t_0H_p}(x,\xi),y,\eta)&\in \Delta(T^*X)\cup \Omega_+\cup(E^*_u\times E^*_s)\setminus\{0\} \text{ or } \xi=0,\eta\neq 0\}).
\end{align*}
\end{lem}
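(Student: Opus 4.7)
The plan is to realize the Schwartz kernel of $e^{-it_0h^{-1}\widetilde{P}_h(z)}\widetilde{R}_h(z)$ as a composition of the kernel of the propagator $U(t_0):=e^{-it_0h^{-1}\widetilde{P}_h(z)}$ with that of $\widetilde{R}_h(z)$, and then to apply the H\"ormander composition formula for wavefront sets. The two ingredients are (i) that $U(t_0)$ is a semiclassical Fourier integral operator whose wavefront set sits on the graph of the time-$t_0$ Hamilton flow of $p$, and (ii) Proposition~\ref{wf}, which controls $\WF_h'(\widetilde{R}_h(z))\cap S^*(X\times X)$.

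First I would establish
\[
\WF_h'(U(t_0))\cap (T^*X\setminus 0)^2 \subset \{((x,\xi),(y,\eta)) : (x,\xi)=e^{t_0H_p}(y,\eta)\}
\]
by a standard Egorov/propagation-of-singularities argument. The semiclassical principal symbol of $\widetilde{P}_h(z)$ is $p$, because the absorbing potential $Q\in\Psi^0_h$ is microlocally supported in $\{|\xi|\leq 1\}$ and hence $O(h^\infty)$ at fiber infinity, while $z$ is a scalar. Since $e^{t_0H_p}$ preserves non-vanishing of the covector, at fiber infinity both endpoints of this graph lie in $S^*X$; in particular $\WF_h'(U(t_0))\cap S^*(X\times X)$ is contained in the projection of this graph.

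Next I would apply the composition formula, which expresses $\WF_h'(U(t_0)\widetilde{R}_h(z))$ as a union of the principal composition $\WF_h'(U(t_0))\circ\WF_h'(\widetilde{R}_h(z))$ together with two mixed terms involving the zero section. For a point $(x,\xi,y,\eta)\in\WF_h'(U\widetilde{R}_h)\cap S^*(X\times X)$ with $\xi\neq 0$, Step 1 forces the intermediate covector in the composition to be $e^{-t_0H_p}(x,\xi)\neq 0$, and then Proposition~\ref{wf} places $(e^{-t_0H_p}(x,\xi),y,\eta)$ in $\Delta(T^*X)\cup\Omega_+\cup(E_u^*\times E_s^*)\setminus\{0\}$, which is the first alternative of the lemma. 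The mixed term with the zero section on the left vanishes at $S^*(X\times X)$ because the flow graph forces $\xi=0$ as soon as the middle covector vanishes, and the remaining mixed term, with the zero section on the right, contributes only at points with $\xi=0$, which on $S^*(X\times X)$ automatically have $\eta\neq 0$---precisely the second alternative of the lemma.

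The main obstacle will be making Step 1 rigorous at $S^*(X\times X)$: since $\widetilde{P}_h$ is not self-adjoint, $U(t_0)$ is not unitary and the usual Egorov conjugation $U(t_0)^{-1}A U(t_0)$ is not directly at one's disposal. This can be circumvented either by forward propagation of singularities for $\widetilde{P}_h^{(x)}$ applied to the kernel of $U(t_0)$, which is compatible with the absorbing $Q$, or by exploiting an integral representation of $U(t_0)\widetilde{R}_h(z)$ as $\int_{t_0}^\infty U(s)\,ds$ (up to a scalar, valid for $z$ in a suitable half-plane) and analyzing the wavefront set of the resulting integral directly, where stationary phase in $s$ restricts the output to the characteristic set and produces the flowout $\Omega_+$.
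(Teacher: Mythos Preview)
Your overall architecture (compose the propagator with $\widetilde{R}_h(z)$ and track wavefront sets) is the right one, but the paper sidesteps exactly the obstacle you flag by never analyzing $U(t_0)=e^{-it_0h^{-1}\widetilde{P}_h(z)}$ directly. Instead it uses the Duhamel identity
\[
e^{-it_0P}-e^{-it_0h^{-1}(hP-iQ)}=h^{-1}\int_0^{t_0}e^{-i(t_0-t)P}\,Q\,e^{-ith^{-1}(hP-iQ)}\,dt,
\]
so that $U(t_0)\widetilde{R}_h(z)$ splits into $e^{-it_0P}\widetilde{R}_h(z)$ plus an integral in which every term contains a factor of $Q$. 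The first piece is harmless: $e^{-it_0P}=\varphi_{-t_0}^*$ is pullback by a diffeomorphism, hence an honest FIO with canonical relation the graph of $e^{t_0H_p}$, and composing with Proposition~\ref{wf} gives the first alternative in the lemma. For the second piece one only needs that $\WF_h(Q)\subset\{|\xi|<1\}$ is compact, so $\WF_h'(Q)\cap S^*(X\times X)=\varnothing$; the composition rule then forces the wavefront set of each integrand at fiber infinity into $(X\times\{0\})\times S^*X$, i.e.\ $\xi=0$, $\eta\neq 0$. No Egorov for the non-self-adjoint $\widetilde{P}_h$, no propagation estimate for $U(t_0)$, and no long-time integral representation are needed.

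By contrast, your Step~1 as stated is not obviously true for $U(t_0)$ itself (as opposed to $e^{-it_0P}$): the presence of $Q$ genuinely allows wavefront set of $U(t_0)$ at points with $\xi=0$, $\eta\neq 0$ in $S^*(X\times X)$, so one cannot assert that $\WF_h'(U(t_0))$ lies on the flow graph even away from the zero section without further argument. Your first workaround (propagation for $\widetilde P_h$ acting in the $x$-variable on the kernel) can be made to work, but it requires a propagation-with-absorption statement at fiber infinity that you would have to prove; your second workaround (the integral $\int_{t_0}^\infty U(s)\,ds$) just pushes the problem to understanding $U(s)$ for all $s\ge t_0$. The Duhamel trick buys you exactly what you want with no extra machinery: it isolates the $Q$-contribution, whose wavefront at infinity is trivial, and reduces the rest to the explicit diffeomorphism pullback.
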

%%%%%%%%%%%%%%%%%%%%%%%%%%%%%%%%%%%%%%%%%%%%%%%%%%%%%%%%%%%%%%%%%%%%%%%%%%%%%%%
%%%%%%%%%%%%%%%%%%%%%%%%%%%%%%%%%%%%%%%%%%%%%%%%%%%%%%%%%%%%%%%%%%%%%%%%%%%%%%%
\begin{proof}
Proposition \ref{wf} gives
\begin{align*}
    \WF_h'(\widetilde{R}_h(z))\cap S^*(X\times X)\subset \kappa(\Delta(T^*X)\cup \Omega_+\cup(E_u^*\times E_s^*)\setminus\{0\}).
\end{align*}
Thus
\begin{align*}
     {\rm WF}_h'(e^{-t_0V}\widetilde{R}_h(z))\cap S^*(X\times X)\subset \\
\kappa(\{(x,\xi,y,\eta)\,:\,(e^{-t_0H_p}(x,\xi),y,\eta)&\in \Delta(T^*X)\cup \Omega_+\cup(E^*_u\times E^*_s)\}\setminus\{0\}).
\end{align*}
We have
\begin{align*}
    e^{-it_0P}- e^{-it_0h^{-1}(hP-iQ)}=h^{-1}\int_0^{t_0}e^{-i(t_0-t)P}Qe^{-ith^{-1}(hP-iQ)}dt,
\end{align*}
and using $\WF_h'(Q)\cap S^\ast(X\times X)=\emptyset$ and \cite[Lemma 3.7(iii)]{wavefront}, we can compute 
\begin{align*}
    \WF_h'(e^{-i(t_0-t)P}Qe^{-ith^{-1}(hP-iQ)}\widetilde{R}_h(z))\cap S^*(X\times X)\subset(X\times\{0\})\times S^*X.
\end{align*}
Therefore
\begin{align*}
    &\WF_h'(e^{-it_0h^{-1}\widetilde{P}_h(z)}\widetilde{R}_h(z))\cap S^*(X\times X)\\
    &\subset  \left({\rm WF}_h'(e^{-it_0P}\widetilde{R}_h(z))\bigcup \cup_{t=0}^{t_0}\WF_h'(e^{-i(t_0-t)P}Qe^{-ith^{-1}(hP-iQ)}\widetilde{R}_h(z))\right)\bigcap S^*(X\times X)\\
    &\subset \kappa(\{(x,\xi,y,\eta)\,:\,(e^{-t_0H_p}(x,\xi),y,\eta)\in \Delta(T^*X)\cup \Omega_+\cup(E^*_u\times E^*_s)\setminus\{0\} \text{ or } \xi=0,\eta\neq 0\}).
\end{align*}
\end{proof}
%%%%%%%%%%%%%%%%%%%%%%%%%%%%%%%%%%%%%%%%%%%%%%%%%%%%%%%%%%%%%%%%%%%%%%%%%%%%%%%

%%%%%%%%%%%%%%%%%%%%%%%%%%%%%%%%%%%%%%%%%%%%%%%%%%%%%%%%%%%%%%%%%%%%%%%%%%%%%%%
Theorem $\ref{traceest}$ then follows from the following general lemma.
%%%%%%%%%%%%%%%%%%%%%%%%%%%%%%%%%%%%%%%%%%%%%%%%%%%%%%%%%%%%%%%%%%%%%%%%%%%%%%%
\begin{lem}\label{traceest_lem}
Let $X$ be an $n$-dimensional smooth manifold and $m\in \mathbb{R}$. If $P(h):C^\infty(X)\to \mathcal{D}'(X)$ is $h$-tempered and satisfies
\begin{itemize}
    \item ${\rm WF}_h'(P(h))\cap \Delta(S^*X)=\varnothing$;
    \item $\|AP(h)B\|_{L^2\to L^2}=\mathcal{O}(h^{-m})$ for $A,B\in \Psi^{\rm comp}_h(X)$;
\end{itemize}
then ${\rm tr}^\flat (P(h))$ is well-defined with
\begin{align*}
    {\rm tr}^\flat (P(h))=\mathcal{O}(h^{-2n-m}).
\end{align*}
\end{lem}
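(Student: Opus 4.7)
The approach is to microlocalize $P(h)$ via a partition of unity at semiclassical scale $h^{1/2}$ and to apply the Schatten H\"older inequality pair by pair. First, I would verify that ${\rm tr}^\flat P(h)$ is well-defined: the hypothesis $\WF_h'(P(h))\cap\Delta(S^*X)=\varnothing$ excludes the conormal to the diagonal at fiber infinity, which is the obstruction to restricting the Schwartz kernel $K$ of $P(h)$ to the diagonal, and together with $h$-temperedness this makes $K|_\Delta$ an $h$-tempered distribution on $X$.

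Next, I would construct a partition of unity $\{\chi_j\}_{j=1}^N$ on $T^*X$, with each $\chi_j$ supported in a ball of diameter $\sim h^{1/2}$, yielding $N=O(h^{-n})$ pieces in a large fixed compact subset of $T^*X$, together with a cutoff $\chi_\infty$ at fiber infinity. The quantizations $A_j=\Op_h(\chi_j)\in \Psi^{\rm comp}_h(X)$ satisfy $\|A_j\|_{L^2\to L^2}=O(1)$ and $\|A_j\|_{\rm HS}^2=(2\pi h)^{-n}\|\chi_j\|_{L^2}^2=O(1)$, so they are Hilbert--Schmidt with uniformly bounded norm. The wavefront hypothesis at $\Delta(S^*X)$, combined with the composition calculus and $h$-temperedness, forces the contributions from $A_\infty P(h) A_\infty$ and the cross terms involving $A_\infty$ to the flat trace to be $O(h^\infty)$.

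For the compact pieces I would then write
$${\rm tr}^\flat P(h)=\sum_{j,k}{\rm tr}^\flat(A_j P(h) A_k)+O(h^\infty).$$
Factorizing $A_j=A_j'A_j''$ and $A_k=A_k''A_k'$ with all factors in $\Psi^{\rm comp}_h$ of bounded Hilbert--Schmidt norm, the Schatten H\"older inequality gives
$$\|A_j P(h) A_k\|_{\rm tr}\leq \|A_j'\|_{\rm HS}\,\|A_j'' P(h) A_k''\|_{L^2\to L^2}\,\|A_k'\|_{\rm HS}=O(h^{-m}),$$
using the operator-norm hypothesis. Each $A_j P(h) A_k$ is then trace class with flat trace equal to the operator trace, so $|{\rm tr}^\flat(A_jP(h)A_k)|=O(h^{-m})$. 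Summing over the $O(h^{-2n})$ pairs $(j,k)$ yields the claimed bound $|{\rm tr}^\flat P(h)|=O(h^{-2n-m})$.

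The main obstacle will be the fiber-infinity step: the condition $\WF_h'(P(h))\cap\Delta(S^*X)=\varnothing$ is purely qualitative, and converting it into a quantitative $O(h^\infty)$ bound on $A_\infty P(h) A_\infty$ and the $A_j$--$A_\infty$ cross terms requires careful composition with $h$-tempered operators, most likely via an auxiliary small parameter $\tilde{h}$ playing the role of $|\xi|^{-1}$ as in Step 1 of the proof of Proposition~\ref{wf}, so that $A_\infty$ can be microlocally separated from $P(h)$ near the diagonal in $S^*(X\times X)$.
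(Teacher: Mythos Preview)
Your Schatten--H\"older route is genuinely different from the paper's argument, which is a direct Fourier computation: writing $u=K_h$ for the Schwartz kernel and $\iota:X\to X\times X$ for the diagonal embedding, the paper expresses
\[
\langle\iota^*(\varphi u),\chi\rangle=\frac{1}{(2\pi h)^{2n}}\int \Fcal_h(\varphi u)(\xi,\eta)\,I_{\chi,h}(\xi,\eta)\,d\xi\,d\eta,\qquad I_{\chi,h}(\xi,\eta)=\int\chi(x)e^{ix\cdot(\xi+\eta)/h}\,dx,
\]
notes that $I_{\chi,h}$ is rapidly decaying away from $\{\xi+\eta=0\}$, bounds $|\Fcal_h(\varphi u)|=O(h^{-m})$ on a fixed bounded region by testing the kernel against wave packets (which is exactly the hypothesis $\|AP(h)B\|=O(h^{-m})$), and uses the wavefront hypothesis to get $\Fcal_h(\varphi u)=O(h^\infty\langle\xi,\eta\rangle^{-\infty})$ in a conic neighbourhood of $\{\xi+\eta=0\}$ at infinity. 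Your trace-class argument for the compact part is correct in spirit, though two remarks: (a) your $A_j$ live in the exotic class $S_{1/2}$, so the hypothesis does not apply uniformly to $A_j''P(h)A_k''$ unless you take $A_j''=A_k''=A_0$ to be a single fixed cutoff in $\Psi^{\rm comp}_h$ independent of $j,k$; and (b) the $h^{1/2}$-scale partition is unnecessary and in fact lossy --- with a single cutoff $A_c$ and $\tilde A_c\in\Psi^{\rm comp}_h$ microlocally equal to $I$ on $\WF_h(A_c)$ one already gets
\[
\|A_cP(h)A_c\|_{\rm tr}\le\|A_c\|_{\rm HS}\,\|\tilde A_cP(h)\tilde A_c\|_{L^2\to L^2}\,\|A_c\|_{\rm HS}=O(h^{-n-m}),
\]
a power of $h^n$ better than the lemma claims.

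The substantive gap is the fiber-infinity step you flag. The hypothesis $\WF_h'(P(h))\cap\Delta(S^*X)=\varnothing$ is, by the very definition of the semiclassical wavefront set at fiber infinity, precisely the statement that $\Fcal_h(\varphi u)=O(h^\infty\langle\xi,\eta\rangle^{-\infty})$ in a conic neighbourhood of the anti-diagonal at infinity; no further ``conversion'' is needed, and the $\tilde h$-rescaling from Proposition~\ref{wf} is not the right tool here (that device manufactures propagation estimates for $\widetilde R_h(z)$, not decay of a fixed kernel). To show ${\rm tr}^\flat(P(h)-A_cP(h)A_c)=O(h^\infty)$ you would pair the kernel with $\iota_*\chi$, pass to the Fourier side, and combine the rapid decay of $I_{\chi,h}$ off the anti-diagonal with the wavefront decay on it --- which is exactly the paper's computation. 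So the fiber-infinity half of your proposal collapses back into the paper's proof, and once that Fourier machinery is in hand it already delivers the full result without the Schatten detour.
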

%%%%%%%%%%%%%%%%%%%%%%%%%%%%%%%%%%%%%%%%%%%%%%%%%%%%%%%%%%%%%%%%%%%%%%%%%%%%%%%
%%%%%%%%%%%%%%%%%%%%%%%%%%%%%%%%%%%%%%%%%%%%%%%%%%%%%%%%%%%%%%%%%%%%%%%%%%%%%%%
\begin{proof}
Since ${\rm WF}_h'(P(h))\cap \Delta(S^*X)=\varnothing$, we have ${\rm WF}'(P(h))\cap \Delta(T^*X)=\varnothing$,
it is then a classical theorem (see e.g. \cite[Theorem 8.2.4]{micro1}) that the flat trace is well-defined as long as the wavefront set does not intersect the diagonal. 

Let $u=K_h$ be the Schwartz kernel of $P(h)$, $\iota:X\to X\times X$ be the diagonal embedding, then for $\chi\in C^\infty(X)$, $\varphi(x,y)=\psi(x)\psi(y)\in C^\infty(X\times X)$ supported near the diagonal, 
\begin{align}\label{fouriertrace}
    \langle\iota^*(\varphi u), \chi \rangle=\langle \varphi u,\iota_*\chi\rangle=\frac{1}{(2\pi h)^{2n}}\int\Fcal_h(\varphi u)I_{\chi,h}(\xi,\eta)d\xi d\eta
\end{align}
where 
\begin{align*}
    I_{\chi,h}(\xi,\eta)=\int\chi(x)e^{ix\cdot(\xi+\eta)/h}dx.
\end{align*}
If $|\xi+\eta|>|\xi|/C$, then
\begin{align*}
    I_{\chi,h}(\xi,\eta)=\Ocal(h^\infty(|\xi|+|\eta|)^{-\infty}).
\end{align*}
Thus we only need to consider the case when $(\xi,\eta)$ lies in a small conical neighbourhood of $\{\xi+\eta=0\}$ or in a neighbourhood of $\{\xi=\eta=0\}$.

\begin{itemize}
    \item[(i)] When $|\xi|+|\eta|\leq C$ is bounded, we have for some $A,B\in\Psi^{\rm comp}_h(X)$
    \begin{align*}
        |\Fcal_h(\varphi u)|&=|\langle P(h)B(\psi(y)e^{-iy\cdot\eta/h}),  A(\psi(x)e^{-ix\cdot\xi/h})\rangle|+\Ocal(h^\infty)\\
        &\lesssim \|AP(h)B\|_{L^2\to L^2}+\Ocal(h^\infty)\\
        &=\Ocal(h^{-m}).
    \end{align*}
    \item[(ii)] When $(\xi,\eta)$ is near fiber infinity and in a small conic neighbourhood of $\{\xi+\eta=0\}$ which does not intersect $\WF_h'(P(h))$, we have
    \begin{align*}
         \Fcal_h(\varphi u)=\Ocal(h^\infty\langle |\xi|+|\eta|\rangle^{-\infty})
    \end{align*}
    thanks to the wavefront condition ${\rm WF}_h'(P(h))\cap \Delta(S^*X)=\varnothing$.
\end{itemize}
Now \eqref{fouriertrace} gives us 
\begin{align*}
       |\langle\iota^*(\varphi u), \chi \rangle|=h^{-2n}\Ocal(h^{-m})=\Ocal(h^{-2n-m})
\end{align*}
and a partition of unity argument finishes the proof.
\end{proof}

\begin{proof}[Proof of Theorem \ref{traceest}]
The operator $\widetilde{R}_h(z):\mathcal{H}^s_h\to\mathcal{H}^s_h$ is bounded and thus $h$-tempered. Lemma \ref{wf2} gives
\begin{align*}
    \WF_h'(e^{-it_0h^{-1}\widetilde{P}_h(z)}\widetilde{R}_h(z))\cap \Delta(S^*X)=\varnothing
\end{align*}
if we choose $t_0>0$ smaller than the least length of the closed orbits. For any $A,B\in\Psi^{\rm comp}_h(X)$ recall
\begin{align*}
    e^{-it_0P}- e^{-it_0h^{-1}(hP-iQ)}=h^{-1}\int_0^{t_0}e^{-ith^{-1}(hP-iQ)}Qe^{-i(t_0-t)P}dt,
\end{align*}
we have
\begin{align*}
    &\|Ae^{-it_0h^{-1}\widetilde{P}_h(z)} \widetilde{R}_h(z)B\|_{L^2\to L^2}\\
    &\lesssim  \|Ae^{-it_0P} \widetilde{R}_h(z)B\|_{L^2\to L^2}+h^{-1}\int_0^{t_0}\|Ae^{-ith^{-1}(hP-iQ)}Qe^{-i(t_0-t)P}\widetilde{R}_h(z)B\|_{L^2\to L^2}dt\\
    &\lesssim  \|Ae^{-it_0P} \widetilde{R}_h(z)B\|_{\Hcal^s_h\to \Hcal^s_h}+h^{-1}\int_0^{t_0}\|Qe^{-i(t_0-t)P}\widetilde{R}_h(z)B\|_{L^2\to L^2}dt\\
    &=\Ocal(h^{-1})+h^{-1}\int_0^{t_0}\|Qe^{-i(t_0-t)P}\widetilde{R}_h(z)B\|_{\Hcal^s_h\to \Hcal^s_h}dt\\
    &=\Ocal(h^{-2}).
\end{align*}
Here we use the fact that on compact sets in the phase space $L^2$ norm is equivalent to any $\mathcal{H}^s$ norm. Now the claim follows from Lemma \ref{traceest_lem}.
\end{proof}

%%%%%%%%%%%%%%%%%%%%%%%%%%%%%%%%%%%%%%%%%%%%%%%%%%%%%%%%%%%%%%%%%%%%%%

%%%%%%%%%%%%%%%%%%%%%%%%%%%%%%%%%%%%%%%%%%%%%%%%%%%%%%%%%%%%%%%%%%%%%%%%%%%%%%%
\subsection*{Acknowledgement}
We would like to thank Semyon Dyatlov for suggesting the argument for estimating compositions of operators in different symbol classes, and Maciej Zworski for numerous discussions and encouraging us to write this note. Long Jin is supported by Recruitment Program of Young
Overseas Talent Plan. Zhongkai Tao gratefully acknowledges partial support under the NSF grant DMS-1901462 and the support of Morningside Center of Mathematics during his visit.
%%%%%%%%%%%%%%%%%%%%%%%%%%%%%%%%%%%%%%%%%%%%%%%%%%%%%%%%%%%%%%%%%%%%%%%%%%%%%%%

%%%%%%%%%%%%%%%%%%%%%%%%%%%%%%%%%%%%%%%%%%%%%%%%%%%%%%%%%%%%%%%%%%%%%%%%%%%%%%%

%%%%%%%%%%%%%%%%%%%%%%%%%%%%%%%%%%%%%%%%%%%%%%%%%%%%%%%%%%%%%%%%%%%%%%%%%%%%%%%

% arXiv bibliography macro
\def\arXiv#1{\href{http://arxiv.org/abs/#1}{arXiv:#1}}

\end{document}